\documentclass[a4paper,11pt]{article}
\usepackage[utf8]{inputenc}
\usepackage[margin=3cm]{geometry}
\usepackage{authblk}
\usepackage{color,colortbl}

\usepackage{amsmath,amssymb,amsthm}
\usepackage{xspace}
\usepackage{xstring}
\usepackage{graphicx}
\usepackage{cleveref}
\usepackage{tikz}
\usepackage{booktabs}
\usepackage{enumerate} 
\usepackage{siunitx}
\usepackage{subcaption}
\usepackage{url}
\usepackage{nicefrac}

\newtheorem{thm}{Theorem}
\newtheorem{prop}[thm]{Proposition}

\theoremstyle{definition}

\usepackage{pifont}
\newcommand{\cmark}{\ding{51}}

\hyphenation{MATSim}

\usepackage[acronym,nomain,nonumberlist,automake,section=subsection]{glossaries}
\makeglossaries

\usepackage{todonotes}

\newcommand{\R}{\mathbb{R}}

\usepackage{natbib}
 \bibpunct[, ]{(}{)}{,}{a}{}{,}%

\begin{document}
\title{Driver-aware charging infrastructure design}

\author[1]{Stefan Kober}
\author[2]{Maximilian Schiffer}
\author[3]{Stephan Sorgatz}
\author[4]{Stefan Weltge}
\affil[1,4]{\textit{\small{Department of Mathematics, TUM School of Computation, Information and Technology, Technical University of Munich}}}
\affil[2]{\textit{\small{TUM School of Management \& Munich Data Science Institute, Technical University of Munich}}}
\affil[3]{\textit{\small{Volkswagen AG}}}
\date{}

\newacronym{cs}{CS}{charging station}
\newacronym{db-cslp}{DB-CSLP}{driver-based charging station location problem}
\newacronym{dvu}{DVU}{driver vehicle unit}
\newacronym{ev}{EV}{electric vehicle}
\newacronym{milp}{MILP}{mixed integer linear problem}
\newacronym{rslp-r}{RSLP-R}{refueling station location problem with routing}
\newacronym{soc}{SOC}{state of charge}

\newglossaryentry{actionNumber}{
	name=\ensuremath{n_a},
	description={number of trips/breaks of a given \gls{dvu}}
}
\newglossaryentry{agentSet}{
	name=\ensuremath{A},
	description={set of electric \glspl{dvu}}
}
\newglossaryentry{breakSequence}{
	name=\ensuremath{B_a},
	description={sequence of breaks of a given \gls{dvu}}
}
\newglossaryentry{chargingModes}{
	name=\ensuremath{M},
	description={set of charging modes}
}
\newglossaryentry{chargingPatterns}{
	name=\ensuremath{P_a},
	description={set of all charging patterns of a given \gls{dvu}}
}
\newglossaryentry{endTime}{
	name=\ensuremath{te^i_a},
	description={end time of a break}
}
\newglossaryentry{locationSet}{
	name=\ensuremath{L},
	description={set of potential charging station locations}
}
\newglossaryentry{modeNumber}{
	name=\ensuremath{n_M},
	description={number of different available charging modes}
}
\newglossaryentry{proximityFunction}{
	name=\ensuremath{\rho_a},
	description={proximity function that assigns each break a set of locations}
}
\newglossaryentry{startTime}{
	name=\ensuremath{ts^i_a},
	description={start time of a break}
}
\newglossaryentry{stationNumber}{
	name=\ensuremath{n_S},
	description={number of possible charging stations in one location}
}
\newglossaryentry{tripSequence}{
	name=\ensuremath{T_a},
	description={sequence of trips of a given \gls{dvu}}
}
\newglossaryentry{validityFunction}{
	name=\ensuremath{v_a},
	description={function that checks whether a pair of pattern and \gls{soc} is valid}
}
\newglossaryentry{validPatterns}{
	name=\ensuremath{VP_a},
	description={valid patterns of a given \gls{dvu}}
}	

\maketitle

\abstract{%
    Public charging infrastructure plays a crucial role in the context of electrifying the private mobility sector in particular for urban regions.
	Against this background, we develop a new mathematical model for the optimal placement of public charging stations for electric vehicles in cities.
	While existing approaches strongly aggregate traffic information or are only applicable to small instances,	we formulate the problem as a specific combinatorial optimization problem that incorporates individual demand and temporal interactions of drivers, exact positioning of charging stations, as well as various charging speeds, and realistic charging curves.
    We show that the problem can be naturally cast as an integer program that, together with different reformulation techniques, can be efficiently solved for large instances.
    More specifically, we show that our approach can compute optimal placements of charging stations for instances based on traffic data for cities with up to $600\,000$ inhabitants and future electrification rates of up to $15\%$.
}%

\section{Introduction}\label{sec:intro}

The promotion and deployment of electric vehicles is a key part of the transformation of the mobility sector towards more emission-free and sustainable transport.
One of the main challenges is the coordination of charging processes and mobility behavior in general.
For commercial fleets a multitude of studies and projects that support the recharging process exist.
Contrarily, in the private sector, early adopters mainly rely on private charging solutions, e.g., wallboxes on their own property.
Especially in urban environments, potential \gls{ev} users do not necessarily have a private parking space, such that the supply of public charging infrastructure is crucial to increase the acceptance of \glspl{ev}.
Accordingly, identifying the right amount of and locations for public charging stations in cities is of particular interest in order to achieve the goals for the market diffusion of \glspl{ev}.
In fact, governments have already set aggregated targets for the development of charging infrastructure.
However, municipalities are struggling with the implementation due to the high costs of installing charging stations and the inherent complexity of the problem, which stems from the fact that the infrastructure is addressed to the general public, i.e., obtaining good solutions requires to solve a large scale facility location problem.

Accordingly, designing charging station networks has gained significant interest in transport optimization and various approaches have been proposed to accomplish this planning task.
The most common approaches are node-based and aggregate charging demand without a temporal dimension, or are path-based and account for charging demands in aggregated flows. 
However, these existing approaches suffer from two major drawbacks: first, both approaches do not consider driver behavior via individual patterns. 
Second, as a consequence of aggregation, even path-based models insufficiently consider temporal interactions between drivers that occupy the same charging stations. 
Mitigating these fundamental drawbacks by incorporating individual and realistic demands and temporal interactions into a mathematical planning approach remains a fundamental challenge to provide profound decision support that enables the market diffusion of \glspl{ev}.
First approaches that tackle these issues can only solve small-scale instances or are simulation-based approaches that cannot be handled within rigorous optimization methods.

Against this background, we develop a new approach to compute optimal placements of public charging stations, incorporating the aforementioned driver-based features.
Moreover, our framework supports individual charging modes and curves, a flexible choice of objectives, and exact positioning.
Despite incorporating this level of complexity, we present an integer programming based approach that remains at the one hand computationally tractable and at the other hand easy to implement.
We demonstrate the effectiveness of our approach on instances based on traffic data of German major cities.
We show that our method scales reasonably well for future electrification rates of up to $15\%$ for cities of up to $600\,000$ inhabitants.

\subsection{Literature Review}
\label{secLiteratureReview}
In the following, we give a short overview over the related literature for the placement of public charging stations, which comprises three main streams, considering flow-based demand for recharging facilities \citep{hodgson_1990,kuby_2005}, arc-based demand \citep{capar_2013}, or node-based demand.
The flow-based and arc-based models find a cover of origin-destination-pairs, or arcs in a given network respectively.
In particular the \gls{rslp-r} is an active area of research for which recently specialized branch-and-cut~\citep{arslan_2019,gopfert_2019} as well as branch-and-price~\citep{yildiz_2016} approaches were developed.
In these studies, refueling stations are placed along long trips, which underlines their applicability for inter-regional travel.
While it is possible to adapt their approaches to an urban context by mapping refueling possibilities to pre-existing breaks, several problem characteristics that are native to the traffic in cities are not considered in this stream of research.
This includes for instance different charging speeds, the capacity of charging stations or the effect of break durations, i.e. that short stops may not be enough to fully recharge an \gls{ev}.

In contrast, node-based approaches aim to cover discrete demand (clusters) at given points.
Most node-based approaches work in a two-fold manner:
in a first step, a (tempo-)spatial demand is estimated. 
This is usually done by aggregating the demand of several vehicles and forming demand clusters.
These clusters are covered in a second step under consideration of problem-specific constraints, e.g., vehicle-to-grid technologies \citep{degennaro_2015}, the exact consumption \citep{andrenacci_2016,hidalgo_2016}, or the electric grid \citep{bayram_2013}.
In general, this approach is more appropriate for the urban context, but the static demand aggregation inherently neglects important features of the problem.
Especially dynamic features like individual user behavior or interaction of users with respect to time are hard to portray in such models \citep{adenaw_2020}.
There exist some attempts to include these dynamic properties in the literature, for instance \cite{cavadas_2015} introduce transferable demand and multiple periods, \cite{adenaw_2020} combine a simulation approach with an evolutionary process to find the best charging stations, \cite{shahraki_2015} accurately model individual demand for plug-in hybrid electric vehicles, and \cite{andrews_2013} introduce a MIP that accounts for user interaction.
As we will detail in Section~\ref{sec:problem}, these approaches leave room for a more detailed analysis of dynamic properties.

\subsection{Contribution}


We aim to derive a mathematical programming based approach that allows to determine positions of charging stations in an urban environment such that \gls{ev} drivers experience a convenient charging experience, i.e., can maintain their individual daily schedule without significant deviations for charging.
To this end, we formulate the problem of finding such positions in a purely combinatorial manner, accounting for individual driver patterns, and temporal interactions, as well as individual charging modes and curves, a flexible choice of objectives, and exact positioning.
This formulation leads to \gls{milp} models, for which we present strengthened formulations that allow to handle large instances and various objective functions.
As a byproduct, this approach also yields dual bounds on the quality of our placements.

We show how to utilize this approach in practice by making use of traffic data that contains information of individual drivers on a typical day within a given city, which can easily be gathered from various traffic simulation frameworks, e.g., MATSim~\citep{horni_2016}. 
We present results of a case study for the city of Düsseldorf (Germany, $\approx620$k citizens), which show that our approach allows to solve instances with the current electrification rate ($\approx1\%$), within few minutes and instances with electrification rates up to $15\%$ in few hours.
This equals placing up to $1500$ charging ports optimally to cover more than $3800$ driver profiles with recharging demand, which improves significantly upon the current state of the art approach that handles temporal interactions \citep{cavadas_2015}, which has been limited to placing $9$ stations for $300$ drivers.

Moreover, we evaluate the quality of the obtained solutions within a simulation environment to show the efficacy of our approach from a practitioners' perspective.

To foster future research and the use of our approach in practice, we open the implementation of our optimization method, our simulation environment, as well as the case study data on Github\footnote{\url{https://github.com/tumBAIS/driverAwareChargingInfrastructureDesign}}.

\subsection{Organization}

In \Cref{sec:problem}, we describe our general approach, introduce the most relevant objects that we deal with and explain how they are exploited to incorporate the aforementioned driver-based features.
We formulate the task of placing public charging stations as a combinatorial optimization problem in \Cref{sec:method}, which gives rise to a natural integer program.
Moreover, we discuss reformulation techniques that are crucial in order to solve real-world instances efficiently.
The second part of our paper is concerned with computational experiments.
In \Cref{sec:case-study}, we show how to derive a case study from existing traffic data.
\Cref{sec:computational-study} demonstrates the effectiveness of the proposed reformulations, and shows that the resulting model is capable to solve real-world instances.
We close our paper with remarks on possible applications and extensions of our work in \Cref{sec:conclusion}.
\section{Problem setting}\label{sec:problem}
In this section, we describe our general approach for placing public charging infrastructure in an urban environment.
We introduce the most relevant objects that we deal with and explain how they are exploited to incorporate the aforementioned driver-based features.

We assume that we have access to a set of \emph{drivers} $D$ (of electric vehicles) and their schedules on a representative day (or set of days).
Here, we identify each driver with her vehicle, typically referred to as \gls{dvu}.
For each driver $d\in D$, we are given a list of \emph{trips} $T(d)$, where each trip is defined by its start location, end location, start time, end time, and the amount by which the driver's \gls{soc} decreases during the trip.
Implicitly, this yields information about stops in between trips, which, for example, correspond to shorter stays while shopping or longer stays while working, and hence to potential charging activities that do not affect a driver's actual schedule.
Moreover, information about charging characteristics of $d$ is available.
We note that, while one may argue that this driver behavior is hardly deterministic over a longer time horizon, we will see in \Cref{sec:computational-study} that it is still possible to derive representative scenarios for strategic planning.

\paragraph{Individual charging modes and curves}
For every \emph{charging mode} $m\in M$, we are given a \emph{charging curve} $f_{m,d}$.
The set $M$ refers to different charging modes and typically consists of two modes: AC charging (slow) and DC charging (fast).
The function $f_{m,d}$ determines, for given $x$ and $t$, the \gls{soc} of $d$ after charging in mode $m$ for $t$ units of time, starting with an \gls{soc} of $x$.

\paragraph{Individual demands}
We are further given each driver's \gls{soc} at the beginning of the day as well as bounds within which the \gls{soc} must stay over the course of the entire day.
Every trip decreases the driver's \gls{soc} by a given certain amount, depending on the trip.
We assume that the \gls{soc} can only be increased by charging at public charging stations, which results in a recharging demand for each driver.
Note, that our planning problem only encompasses the placement of public charging infrastructure.
Private charging possibilities can still be portrayed, for instance by adapting the \gls{soc} and the bounds for a driver that has access to home charging.
Consequently, we also omit commercial fleets from our model, since they usually charge at specially dedicated charging stations and therefore do not need to rely on public charging infrastructure.

\paragraph{Accurate placements}
In order to allow for charging operations, public charging stations must be made available at a subset of \emph{potential locations} $L$.
The latter is a finite set of locations that may arbitrarily arise from the operating area.
Each location can be equipped with a certain type of charging station, which is defined by a number of charging ports of a specific mode.
To obtain a high degree of accuracy in the placement, the set of locations has to be defined accordingly. 
This is for instance possible by using a grid approach, similar to the one employed by \cite{cavadas_2015}.

\paragraph{Capacities and time interactions}
Once charging stations have been placed, drivers may occupy single charging ports between their trips.
Since time intervals between two consecutive trips typically refer to activities carried out by the drivers, we assume that drivers charge at (and hence occupy) a port for the whole time interval.
At every time at most one driver may be connected to a port.

\paragraph{Different objectives}
Our goal is to determine a subset of locations $L'\subseteq L$, and for each location $\ell\in L'$ the type of charging station installed at $\ell$.
A possible objective is to minimize the total cost associated to the selected charging stations under the constraint that every driver's demand can be satisfied.
In this setting, a driver may only be allowed (or willing) to occupy charging ports at locations whose distance to the driver's current location is below a certain threshold.
However, as we will see later, our approach can be easily adapted to various other objectives.

\bigskip
\noindent

\Cref{tab:literature_model_features} shows the problem characteristics of the closest related works.
As can be seen, existing approaches that are node or path-based are partially scalable, but neglect individual demand, realistic charging modes and curves, as well as temporal interactions.
First approaches that aim to mitigate these shortcomings partially consider capacities but neglect individual demand and multiple objectives or consider the latter, but ignore capacities.
All of these approaches are not scalable to large instances.
Concluding, to the best of the authors' knowledge, none of the existing approaches accounts for individual driver patterns, temporal interactions, individual charging modes and curves, a flexible choice of objectives, as well as exact positioning, and is at the same time scalable to large instances.

\begin{table}
    \noindent\makebox[\textwidth]{
    \footnotesize
	\begin{tabular}{ p{40mm} | c | c | c | c | c}
        & \rotatebox[]{280}{\parbox{22mm}{\mbox{charging station} capacities \& interactions}} & \rotatebox[]{280}{\parbox{22mm}{individual demand}} & \rotatebox[]{280}{\parbox{22mm}{charging modes \& curves}} & \rotatebox[]{280}{\parbox{22mm}{different \mbox{objectives}}} & \rotatebox[]{280}{\parbox{22mm}{scalability}}\\
		\hline
        \hline
        \cite{hodgson_1990,bayram_2013} & & & & & \cmark \\
        \hline
        \cite{kuby_2005,gopfert_2019,yildiz_2016} & &  & & & \cmark \\
        \hline
        \cite{capar_2013,arslan_2019} & &  & & \cmark & \cmark \\
        \hline
        \cite{hidalgo_2016} & & \cmark & (\cmark) & \cmark & \\
        \hline
        \cite{cavadas_2015} & (\cmark) & & & & \\
        \hline
        \cite{shahraki_2015} & & \cmark & \cmark & & (\cmark) \\
        \hline
        \cite{andrews_2013} & \cmark & & (\cmark) & & \\
        \hline
		Our work & \cmark & \cmark & \cmark & \cmark & \cmark \\
    \end{tabular}
    }
    \caption{Characteristics of previous optimization-based approaches for the placement of public charging infrastructure.}
    \label{tab:literature_model_features}
\end{table}

\section{Method}\label{sec:method}
In this section, we derive a mathematical formulation for the task of positioning public charging stations that is based on the setting described in the previous section.
We first formulate an optimization problem that includes all relevant objects and constraints in a combinatorial manner.
This formulation gives rise to a natural integer program, which we discuss in \Cref{sec:ip}. 
In \Cref{sec:enhance}, we describe modifications to the integer program that do not affect the set of feasible solutions but are crucial in order to solve real-world instances efficiently.

\subsection{Combinatorial optimization problem}\label{sec:cop}
Recalling \Cref{sec:problem}, we assume that we are given finite sets of drivers $D$, charging modes $M$, and locations for potential charging stations $L$.
Moreover, we are given a set of trips $T(d)$ for every driver $d\in D$.

\paragraph{Breaks}
Since we associate the time intervals between consecutive trips with a driver's charging opportunities, we extract a set of \emph{breaks} $B(d)$ from the trip data for each driver.
A break is characterized by its start time, end time, and location.
We assume that breaks are individual objects for each driver, i.e., $B(d)\cap B(d')=\emptyset$ for $d\neq d'\in D$.

\paragraph{Nearby locations}
For each break $b\in B(d)$, we compute a set of \emph{nearby locations} $L(b)\subseteq L$ that driver $d$ is willing to charge at during break $b$.
For example, the set $L(b)$ may consist of all locations in $L$ that are within a certain distance to the location of $b$.
However, we do not make any specific assumptions on $L(b)$ in order to keep our formulation as general as possible.

\paragraph{Feasible charging plans}
To identify at which breaks a driver $d\in D$ is charging (in a specific mode), we define a \emph{charging plan} of $d$ as a set of pairs $(b,m)\in B(d)\times M$.
We say that a charging plan $P\subseteq B(d)\times M$ is \emph{feasible} if its charging operations ensure that $d$ can complete all trips while respecting the pre-defined bounds on the \gls{soc} at any time.
Given the charging curves of $d$, we assume that we can efficiently determine whether a charging plan is feasible.
We denote the set of all feasible plans of a driver~$d$ by $F(d):=\{P\subseteq B(d)\times M: P\text{ is feasible}\}$.

\paragraph{Charging stations}
We consider a \emph{charging station} as a tuple $(\ell,m,\Delta)\in L\times M\times\mathbb{Z}_{\ge1}$ indicating its location $\ell$, mode $m$, and number of ports $\Delta$.
We assume that we are given a finite set $\Omega\subseteq L\times M\times\mathbb{Z}_{\ge1}$ denoting possible charging stations.
Each charging station is associated with a (possibly individual) \emph{cost} $c:\Omega\to\mathbb{R}_{\ge0}$.

\paragraph{Time points}
Finally, we define a set of \emph{time points} $T$ in order to monitor the capacity utilization of charging stations at these time points.
We use the shorthand notation $t\in b$ to denote that a time point $t\in T$ is contained in the time interval of the break $b\in B(d)$.

\paragraph{Task}
Our goal is to find a set of charging stations $S\subseteq\Omega$ that minimizes the total cost $\sum_{s\in S}c(s)$ under the following constraints.
\begin{enumerate}
    \item For each driver $d\in D$ there must exist a feasible charging plan $P(d)\in F(d)$, where every charging process $(b,m)\in P(d)$ is assigned to a charging station $s(b,m)=(\ell,m,\Delta)\in S$ with $\ell\in L(b)$.
    \item Moreover, these assignments have to respect the charging station capacities at any time, i.e.  
    \begin{equation*}
        |\{(b,m)\in P(d):d\in D,\,s(b,m)=s,\,t\in b\}|\le\Delta
    \end{equation*}
    holds for every $t\in T$ and $s=(\ell,m,\Delta)\in S$.
\end{enumerate}

We note that it is possible to adjust the constraints and objective, for instance by introducing a budget constraint and maximizing the amount of satisfied demand, or by working on several driver sets in parallel.

\subsection{Integer program}\label{sec:ip}
In this section, we derive an integer programming formulation of the above combinatorial optimization problem.
First, we introduce a binary variable for every charging station in $\Omega$, i.e., 
\[x_{\ell,m,\Delta}\in\{0,1\}\quad\forall\ (\ell,m,\Delta)\in\Omega,\]
where $S$ corresponds to all $(\ell,m,\Delta)$ with $x_{\ell,m,\Delta}=1$.
The objective function is easily expressed as
\[\mathrm{minimize}\ \sum_{(\ell,m,\Delta)\in\Omega}c(\ell,m,\Delta)\cdot x_{\ell,m,\Delta}.\]
The constraints
\[\sum_{\substack{m,\Delta:\\(\ell,m,\Delta)\in\Omega}}x_{\ell,m,\Delta}\le1\quad\forall\ \ell\in L\]
ensure that we can create at most one charging station per location.
In order to capture charging operations of drivers, we introduce binary variables 
\[y_{d,b,\ell,m}\in\{0,1\}\quad\forall\ d\in D,\,b\in B(d),\,\ell\in L(b),\,m\in M,\]
where $y_{d,b,\ell,m}=1$ encodes that driver $d$ charges at location $\ell$ with mode $m$ during break $b$.
Recall that a charging station $(\ell,m,\Delta)$ can be occupied by at most $\Delta$ drivers simultaneously, which is expressed by the constraints
\begin{equation}\label{eq:cap_constrs}
    \sum_{d\in D}\sum_{\substack{b\in B(d):\\\ell\in L(b),\,t\in b}}y_{d,b,\ell,m}\le\sum_{\substack{\Delta:\\(\ell,m,\Delta)\in\Omega}}\Delta\cdot x_{\ell,m,\Delta}\quad\forall \ \ell\in L,\,m\in M,\,t\in T.
\end{equation}
Notice that for fixed $\ell^*\in L$ the sets $\{(d,b):d\in D,\,b\in B(d),\,\ell^*\in L(b),\,t\in b\}$ can be identical for different time points $t\in T$, resulting in duplicate constraints.
In fact, it suffices to impose the above constraints for the start times of all breaks $b$ with $\ell^*\in L(b)$ only.

Finally, we need to make sure that every driver charges according to one of her feasible charging plans.
A straightforward way to model this is by introducing binary variables
\begin{equation*}
    z_{d,P}\in\{0,1\}\quad\forall\ d\in D,\,P\in F(d),
\end{equation*}
where $z_{d,P}=1$ encodes that driver $d$ charges according to $P$.
The requirement that every driver has to decide on exactly one feasible charging plan is expressed as 
\begin{equation*}
    \sum_{P\in F(d)}z_{d,P}=1\quad\forall\ d\in D.
\end{equation*}
The constraints
\begin{equation}\label{eq92437d}
    \sum_{\ell\in L(b)}y_{d,b,\ell,m}=\sum_{\substack{P\in F(d):\\(b,m)\in P}}z_{d,P}\quad\forall\ d\in D,\,b\in B(d),\,m\in M
\end{equation}
ensure that driver $d$ selects charging operations that match the chosen plan.

Note that the model variations mentioned in~\Cref{sec:cop} can be captured in a similar way.

\subsection{Strengthened formulations}\label{sec:enhance}
It turns out that the above integer programming formulation can be significantly improved since its linear programming relaxation is rather weak in terms of its integrality gap.
That is, it admits non-integer points satisfying all linear constraints but whose objective value is much smaller than the true optimum value.
Fortunately, it is possible to derive additional linear inequalities that reduce the integrality gap significantly but do not affect the set of feasible integer solutions.
We present a family of such cuts in \Cref{subsubsec:capcuts}.

Moreover, we discuss the constraints in our model that force drivers to follow feasible charging plans in \Cref{sec:cpc}.
We observe that they often can be reformulated by eliminating several variables, which has an additional positive impact on the computation time needed to solve our model.
The effectiveness of the strengthened formulations is demonstrated in \Cref{sec:computational-study}.

\subsubsection{Capacity cuts}\label{subsubsec:capcuts}
To illustrate why our original formulation is weak, suppose that an (optimal) solution to the integer program decides to build a charging station $(\ell^*,m^*,\Delta)\in\Omega$ with $\Delta=k\ge2$ ports.
Suppose further that at every time at most one driver is charging at this station.
This means that the left-hand sides of constraints~\eqref{eq:cap_constrs} that correspond to $\ell^*,m^*$ are at most $1$ for all $t\in T$.
Since these constraints are the only ones that relate $x$ and $y$ directly, we can set $x_{\ell^*,m^*,\Delta}=\frac1k$ and still obtain a solution that is feasible to the linear programming relaxation.
Clearly, setting a variable to $\frac1k$ instead of $1$ may decrease the objective value significantly.
A simple way to exclude such fractional points is by adding the inequalities 
\begin{equation}\label{eqajad32}
    y_{d,b,\ell^*,m^*}\le\sum_{\Delta:(\ell^*,m^*,\Delta)\in\Omega} x_{\ell^*,m^*,\Delta}
\end{equation}
for all $d\in D$ and $b\in B(d)$ with $\ell^*\in L(b)$, which are certainly valid for all integer solutions.
In our computational study, we show that adding these inequalities improves our model significantly.

We observe that, in the case where there is only one $\Delta$ with $(\ell^*,m^*,\Delta)\in\Omega$, inequalities~\eqref{eq:cap_constrs} together with~\eqref{eqajad32} cannot be further strengthened in the following sense:
\begin{prop}\label{prop1}
    Let $\mathcal{B}:=\{b:d\in D,\,b\in B(d),\,\ell^*\in L(b)\}$ and suppose that $(\bar y,\bar x)\in\R^\mathcal B_{\ge0}\times[0,1]$ satisfies $\sum_{b\in\mathcal{B}:t\in b}\bar y_b\le\Delta\bar x$ for all $t\in T$ and $\bar y_b\le\bar x$ for all $b\in\mathcal{B}$. 
    Then $(\bar y,\bar x)$ is a convex combination of binary vectors $(y',x')\in\{0,1\}^{\mathcal{B}}\times\{0,1\}$ that satisfy $\sum_{b\in\mathcal{B}:t\in b}y'_b\le\Delta x'$ for all $t\in T$.    
\end{prop}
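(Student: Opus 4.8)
The plan is to exploit the interval structure of the capacity constraints $\sum_{b\in\mathcal{B}:t\in b}\bar y_b\le\Delta\bar x$ together with a scaling trick that eliminates the scalar variable $\bar x$ from the picture. The key structural fact is that the $0/1$ matrix recording which breaks contain a given time point has the \emph{consecutive ones property}, and is therefore totally unimodular; the box cuts $\bar y_b\le\bar x$ are exactly what is needed to make this apply after normalization.

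First I would dispose of the degenerate case $\bar x=0$. Here the cuts $\bar y_b\le\bar x$ force $\bar y_b=0$ for every $b\in\mathcal B$, so $(\bar y,\bar x)$ is itself the all-zero binary vector and the claim is trivial. So assume $\bar x>0$ and set $\hat y_b:=\bar y_b/\bar x$. Dividing the hypotheses by $\bar x$ yields $0\le\hat y_b\le 1$ for all $b\in\mathcal B$ and $\sum_{b:t\in b}\hat y_b\le\Delta$ for all $t\in T$, so $\hat y$ lies in the polytope
\[
  Q:=\Bigl\{y\in[0,1]^{\mathcal B}:\ \textstyle\sum_{b:t\in b}y_b\le\Delta\ \ \forall\,t\in T\Bigr\}.
\]

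The crux of the argument, and the step I expect to be the main obstacle, is to show that $Q$ is an integral polytope. For this I would order the finitely many time points in $T$ increasingly. Since each break is a time interval, the set of time points it contains forms a block of \emph{consecutive} rows; equivalently, the matrix $A$ with $A_{t,b}=1$ iff $t\in b$ has the consecutive ones property in its columns. Such matrices are totally unimodular, and appending the box constraints $0\le y\le 1$ (whose rows are unit vectors) preserves total unimodularity. As $\Delta$ and the bounds are integral, every vertex of $Q$ is integral, so $\hat y=\sum_i\lambda_i y^{(i)}$ is a convex combination of binary vectors $y^{(i)}\in\{0,1\}^{\mathcal B}$ each satisfying $\sum_{b:t\in b}y^{(i)}_b\le\Delta$.

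Finally I would reassemble the desired decomposition of $(\bar y,\bar x)$. To each binary point $(y^{(i)},1)$ I assign weight $\bar x\lambda_i$, and to the all-zero point $(0,0)$ I assign weight $1-\bar x$. These weights are nonnegative (using $\bar x\in[0,1]$) and sum to $\bar x\sum_i\lambda_i+(1-\bar x)=1$; the $y$-coordinate of the combination is $\bar x\hat y=\bar y$ and the $x$-coordinate is $\bar x$. Each $(y^{(i)},1)$ satisfies $\sum_{b:t\in b}y^{(i)}_b\le\Delta=\Delta\cdot1$, and $(0,0)$ satisfies the capacity inequality trivially, so every point appearing in the combination has the required form. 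This proves the proposition.
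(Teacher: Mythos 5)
Your proof is correct and follows essentially the same route as the paper's: dispose of $\bar x=0$, scale by $1/\bar x$ to reduce to the polytope $Q$, invoke the consecutive ones property and total unimodularity to get integrality of $Q$, and reassemble the convex combination using $(\mathbf{0},0)$ with weight $1-\bar x$. No gaps; the argument matches the published proof step for step.
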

\begin{proof}
    If $\bar x = 0$, then this implies $\bar y = \mathbf{0}$ in which case we are done.
    Otherwise, let $y^* := \frac{1}{\bar x} \bar y$ and observe that $(\bar y,\bar x) = (1-\bar x) (\mathbf{0},0) + \bar x (y^*,1)$, that is, $(\bar y,\bar x)$ is a convex combination of $(\mathbf{0},0)$ and $(y^*,1)$.
    Thus, it remains to show that $y^*$ is a convex combination of binary vectors $y' \in \{0,1\}^{\mathcal{B}}$ that satisfy $\sum_{b\in\mathcal{B}:t\in b}y'_b\le\Delta$ for all $t\in T$.

    To this end, notice that $y^*$ satisfies
    $
        \sum_{b\in\mathcal{B}:t\in b} y^*_b = \frac{1}{\bar x}\sum_{b\in\mathcal{B}:t\in b}\bar y_b\le\Delta
    $
    for all $t\in T$ and $y^*_b=\frac{1}{\bar x}\bar y_b\le1$ for all $b\in\mathcal{B}$.
    In other words, $y^*$ is contained in the polytope
    \[
        Q := \left\{ \tilde y \in [0,1]^\mathcal B : \sum_{b\in\mathcal{B}:t\in b} \tilde y_b \le \Delta \text{ for all }t \in T\right\}.
    \]
    By ordering the constraints according to their time $t\in T$, we see that $Q$ can be written as $\{ y : Ay \le h, \, \mathbf{0}\le y \le \mathbf{1} \}$, where $h$ is an integer vector and $A$ is a $0/1$-matrix that has the \emph{consecutive ones property}.
    Such matrices are known to be totally unimodular~\citep{fulkerson_1965}, which implies that all vertices of $Q$ are integer, cf.~\cite[Theorem~19.3]{schrijver_1998}, and hence binary.
    Thus, $y^*$ is a convex combination of binary vectors $y' \in \{0,1\}^{\mathcal{B}} \in Q$ as claimed.
\end{proof}
However, even with these constraints there are still several constellations in which the $x$-values can be decreased in the linear programming relaxation.
For this reason, one may consider the following natural generalization of inequalities~\eqref{eqajad32}.
For every $t^*\in T$ and every set $S\subseteq\{(d,b):d\in D,\,b\in B(d),\,\ell^*\in L(b),\,t^*\in b\}$, consider the inequality
\begin{equation}\label{eq7u3njg}
    \sum_{(d,b)\in S}y_{d,b,\ell^*,m^*}\le\sum_{\Delta:(\ell^*,m^*,\Delta)\in\Omega}\min(|S|,\Delta)\cdot x_{\ell^*,m^*,\Delta}.
\end{equation}
Note that the inequalities~\eqref{eqajad32} arise as a special case from~\eqref{eq7u3njg} where $|S|=1$.
Again, it is easy to see that every integer solution satisfies~\eqref{eq7u3njg}.
Still, the inequalities~\eqref{eq7u3njg} do not suffice to generalize the observation of \Cref{prop1} to the case where there is more than one $\Delta$ with $(\ell^*,m^*,\Delta)\in\Omega$, and it is not clear how the inequalities can be further strengthened.
Moreover, given the strength of the inequalities in~\eqref{eqajad32} and the large number of the inequalities in~\eqref{eq7u3njg}, incorporating the latter inequalities in an efficient way remains challenging.

\subsubsection{Charging plan cuts}\label{sec:cpc}
Recall that we require every driver $d\in D$ to follow a feasible charging plan in $F(d)$.
To this end, we introduced auxiliary variables $z_{d,P}$ for all $P\in F(d)$ and constraints~\eqref{eq92437d} to link $y$ and $z$.
While this formulation is very explicit, we describe a reformulation that avoids the auxiliary variables $z$.

For a charging plan $P\in F(d)$ let $\chi(P)\in\{0,1\}^{B(d)\times M}$ denote its characteristic vector, i.e., $\chi(P)_{b,m}=1$ if and only if $(b,m)\in P$.
Consider the polytope $Q(d):=\operatorname{conv}\{\chi(P):P\in F(d)\}$.
Suppose we have computed an inequality description $Q(d)=\{q\in\R^{B(d)\times M}:Aq\ge h\}$ for $Q(d)$, where $A$ is a matrix and $h$ is a vector such that the rows of $A$ and $h$ correspond to some index set $I$ and the columns of $A$ correspond to $B(d)\times M$.
The following statement shows that the auxiliary variables $z$ and the constraints~\eqref{eq92437d} can be avoided by imposing some specific linear inequalities on $y$ only.
\begin{prop}\label{prop2}
    For a vector $y\in\R^\Gamma$ with $\Gamma:=\{(b,\ell,m):b\in B(d),\,\ell\in L(b),\,m\in M\}$, the following are equivalent.
    \begin{itemize}
        \item There exist $z\in[0,1]^{F(d)}$ with $\sum_{P\in F(d)}z_P=1$ and 
        \[
            \sum_{\ell\in L(b)}y_{b,\ell,m}=\sum_{\substack{P\in F(d):\\(b,m)\in P}}z_P 
        \]
        for all $b\in B(d)$ and $m\in M$.
        \item The linear inequalities 
        \begin{equation}\label{eqhf82t2}
            \sum_{(b,m)\in B(d)\times M}\left(\sum_{\ell\in L(b)}A_{i,(b,m)}y_{b,\ell,m}\right)\ge h_i    
        \end{equation}
        are satisfied for all $i\in I$.
    \end{itemize}
\end{prop}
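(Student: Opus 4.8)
The plan is to reduce both statements to a single membership condition in the polytope $Q(d)$. The central observation is that $y$ enters both conditions only through the location-aggregated quantities $q_{b,m} := \sum_{\ell \in L(b)} y_{b,\ell,m}$, so I would first introduce the vector $q \in \R^{B(d)\times M}$ defined by this formula and rewrite everything in terms of $q$. The whole proof then amounts to showing that each of the two bullets is equivalent to $q \in Q(d)$.

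First I would handle the first bullet. Given $z \in [0,1]^{F(d)}$ with $\sum_{P\in F(d)} z_P = 1$ satisfying the stated equalities, I would observe that for every $(b,m)$ we have $q_{b,m} = \sum_{P:(b,m)\in P} z_P = \sum_{P\in F(d)} z_P\,\chi(P)_{b,m}$, so that $q = \sum_{P\in F(d)} z_P\,\chi(P)$. Since the $z_P$ are nonnegative and sum to one, this exhibits $q$ as a convex combination of the characteristic vectors $\chi(P)$, i.e.\ $q \in Q(d)$. Conversely, if $q \in Q(d) = \operatorname{conv}\{\chi(P): P \in F(d)\}$, then by the definition of the convex hull there exist coefficients $z_P \ge 0$ with $\sum_{P} z_P = 1$ and $q = \sum_{P} z_P\,\chi(P)$; these coefficients automatically satisfy $z_P \le 1$, and reading off the $(b,m)$-entry of this identity recovers exactly the required equalities. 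Hence the first bullet is equivalent to $q \in Q(d)$.

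Next I would treat the second bullet, which is immediate once the aggregation is in place: by the definition of $q$, the left-hand side of \eqref{eqhf82t2} equals $\sum_{(b,m)\in B(d)\times M} A_{i,(b,m)}\,q_{b,m}$, so the family of inequalities \eqref{eqhf82t2} ranging over all $i \in I$ is precisely the system $Aq \ge h$. By the assumed inequality description $Q(d) = \{q : Aq \ge h\}$, this too is equivalent to $q \in Q(d)$.

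Combining the two equivalences shows that both bullets hold if and only if $q \in Q(d)$, and hence that they are equivalent to one another, which is the claim. I do not expect a genuine obstacle here. The only points that require a little care are verifying that the convex-combination coefficients produced in the first bullet indeed lie in $[0,1]$ (they do, being nonnegative and summing to one) and substituting the aggregation $q$ consistently into the matrix inequality; the essential content is simply the recognition that $y$ affects both conditions exclusively through $q$.
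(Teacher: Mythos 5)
Your proposal is correct and rests on the same two ingredients as the paper's proof: the aggregation $q_{b,m}=\sum_{\ell\in L(b)}y_{b,\ell,m}$ and the assumed inequality description $Q(d)=\{q:Aq\ge h\}$. The only (cosmetic) difference is that you factor both bullets through the single pivot condition $q\in Q(d)$, whereas the paper proves the two implications directly; the underlying computations are the same.
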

\begin{proof}
    Let $y\in\R^\Gamma$, and suppose there is some $z\in[0,1]^{F(d)}$ satisfying the first condition.
    For every $i\in I$, we have
    \begin{align*}
        \sum_{(b,m)\in B(d)\times M}\left(\sum_{\ell\in L(b)}A_{i,(b,m)}y_{b,\ell,m}\right)&=\sum_{(b,m)\in B(d)\times M}A_{i,(b,m)}\sum_{\substack{P\in F(d):\\(b,m)\in P}}z_P\\
        &=\sum_{(b,m)\in B(d)\times M}A_{i,(b,m)}\sum_{P\in F(d)}z_P\chi(P)_{b,m}\\
        &=\sum_{P\in F(d)}z_P\sum_{(b,m)\in B(d)\times M}A_{i,(b,m)}\chi(P)_{b,m}\\
        &\ge\sum_{P\in F(d)}z_Ph_i=h_i,
    \end{align*}
    where the inequality holds since $\chi(P)$ is contained in $Q(d)$ and hence satisfies $A\chi(P)\ge h$ for all $P\in F(d)$.
    
    Suppose now that $y\in\R^\Gamma$ satisfies~\eqref{eqhf82t2} for all $i\in I$.
    Define $q\in\R^{B(d)\times M}$ via $q_{b,m}=\sum_{\ell\in L(b)}y_{b,\ell,m}$ for all $b\in B(d)$ and $m\in M$.
    For every $i\in I$ we have 
    \begin{align*}
        \sum_{(b,m)\in B(d)\times M}A_{i,(b,m)}q_{b,m}=\sum_{(b,m)\in B(d)\times M}\left(\sum_{\ell\in L(b)}A_{i,(b,m)}y_{b,\ell,m}\right)\ge h_i
    \end{align*}
    and hence $Aq\ge h$.
    This means, that $q$ is contained in $Q(d)$ and hence can be written as $q=\sum_{P\in F(d)}z_P\chi(P)$ for some $z\in[0,1]^{F(d)}$ with $\sum_{P\in F(d)}z_P=1$.
    For every $b\in B(d)$ and $m\in M$, we obtain 
    \[
        \sum_{\ell\in L(b)}y_{b,\ell,m}=q_{b,m}=\sum_{P\in F(d)}z_P\chi(P)=\sum_{\substack{P\in F(d):\\(b,m)\in P}}z_P.
    \]
\end{proof}
In fact, we suggest to replace the auxiliary variables $z$ and the constraints~\eqref{eq92437d} by the inequality description of $Q(d)$, whenever the latter can be easily computed.
For a general set of feasible charging plans $F(d)$ it is very difficult to derive an inequality description of $Q(d)$ in a closed form.
However, one may use existing software tools to compute the convex hull of a given set of points.
This is particularly feasible when the ambient dimension of $Q(d)$ is small.
Since the ambient dimension of $Q(d)$ is equal to $|B(d)|\cdot|M|$, this approach is applicable for drivers $d\in D$ with a reasonably small number of breaks.
Indeed, in our computational experiments, for most drivers the set $B(d)$ consists of at most five breaks, in which case the inequality description of $Q(d)$ can be quickly computed.
Moreover, it turns out that this reformulation improves our model further.
\section{Case study}\label{sec:case-study}

In this section, derive a case study from existing traffic data.
We perform our evaluations on open traffic data\footnote{\url{https://svn.vsp.tu-berlin.de/repos/public-svn/matsim/scenarios/countries/de/duesseldorf/}} that has been generated for the city of Düsseldorf, Germany ($\approx619\,000$ inhabitants in 2019) provided by~\cite{rakow_2021}.
Based on current traffic information, the authors used MATSim to simulate a collection of drivers together with their stops within a typical day in Düsseldorf.
In what follows, we describe how to turn such information into driver data as needed for our method.

Our data contains $268\,110$ individual agents and their mobility plans on a typical work day.
The set of agents encompasses all agents that spend a certain amount of time in Düsseldorf during the day.
Out of these agents, we identify $113\,852$ as actual residents, i.e., agents that start their first trip within the city borders of Düsseldorf, which are responsible for $477\,277$ trips, using different modes of transport.
The true number of such trips was approximately $2.16$ million in 2019 (calculations based on~\cite[]{gerike_2020}).
Therefore, we estimate that our set of agents and consequently the portrayed traffic represents approximately one quarter of the true size.

Extracting only the trips that were done by car, we obtain $475\,639$ trips that are done by $128\,059$ different drivers.
Recall, that this only represents approximately one quarter of the traffic, so we assume that our groundset of potential \glspl{ev} consists of $N := 512\,236$ drivers.
To identify a set of potential \gls{ev} drivers, we restrict ourselves to drivers whose consecutive trips end and start at locations within a distance of $300\mathrm{m}$, which we also require for the last and first trip to hold.

An \gls{ev} in our setting is characterized by its battery capacity, its range and its charging behavior.
Since we are investigating an urban environment, we assume all cars to be rather compact with a battery capacity of $50\mathrm{kWh}$ and an effective range of $260\mathrm{km}$, corresponding to a consumption of $19.23\mathrm{kWh}$ per $100\mathrm{km}$.
However, we remark that our approach would also allow to model arbitrary \glspl{ev} and assign them to different drivers.
We consider two modes of charging, AC and DC.
For AC charging, the charging speed can be assumed to be almost independent of the \gls{soc} of the battery. 
Therefore, we assume a constant charging speed of $11\mathrm{kW}$ and an efficiency of $0.85$, which results in an effective charging speed of $9.35\mathrm{kW}$.
The charging speed of DC charging typically follows a charging curve whose speed decreases with the \glspl{soc}.
We use a piecewise linear function as a charging curve, as depicted in~\Cref{fig:cc}.

\begin{figure}
    \begin{center}
        \includegraphics[width=.65\textwidth]{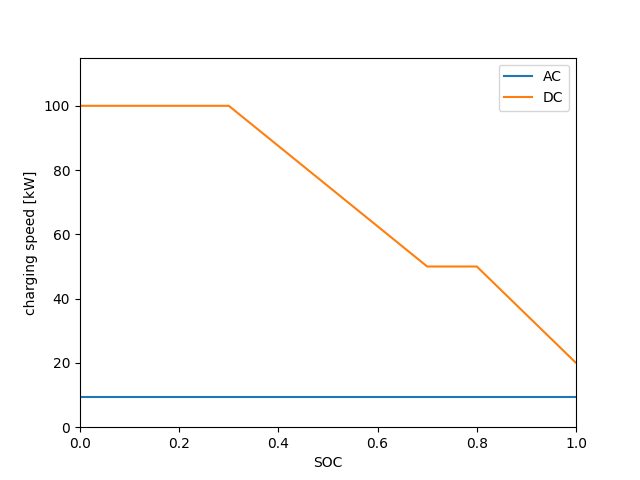}
    \end{center}
    \caption{Effective charging speeds of AC and DC charging stations in our case study.}
    \label{fig:cc}
\end{figure}

We assume that during an average day, drivers avoid their state of charge to fall below $10\%$ of its capacity.
Based on this, we exclude all drivers whose \gls{soc} will fall below $10\%$ even when starting with an \gls{soc} of $100\%$ in the morning and DC charging at every break within the planning region.
This results in a set of $100\,854$ remaining drivers, which we use as our final set of drivers $\mathcal{D}$ to sample from.
Out of these drivers, $26\,740$ live within the city of Düsseldorf and we denote this set by $\mathcal{D}_c \subseteq \mathcal{D}$.

We assume that all drivers from $\mathcal{D} \setminus \mathcal{D}_c$ have a wallbox, i.e., they have the opportunity to recharge their vehicle at home.
This is motivated by the facts that drivers in $\mathcal{D} \setminus \mathcal{D}_c$ live outside of our planning region and that houses in rural regions have a higher probability of being able to install a wallbox \cite[]{dena_2020}.
Moreover, we assume that $39\%$ of the drivers in $\mathcal{D}_c$, selected uniformly at random, also have access to a wallbox.
This percentage is based on the \emph{dena} study by~\cite{dena_2020}.

To generate an actual instance for our approach, we proceed as follows.
Assuming a specific electrification rate $r \in [0,1]$, we uniformly sample $r N \cdot \nicefrac{|\mathcal{D}_c|}{|\mathcal{D}|}$ distinct drivers from $\mathcal{D}_c$ and $r N \cdot (1 - \nicefrac{|\mathcal{D}_c|}{|\mathcal{D}|})$ drivers from $\mathcal{D} \setminus \mathcal{D}_c$.

We assume that all sampled drivers with a wallbox start with an \gls{soc} of $100\%$.
For each driver $d$ without a wallbox, let $\mu_d \ge 10\%$ be the minimum starting \gls{soc} needed to never reach an \gls{soc} below $10\%$ when DC charging at every break within the planning region.
The starting \gls{soc} of $d$ is then chosen uniformly at random from the interval $[\max(\mu_d,20\%),100\%]$.

We define a charging plan for a driver $d$ as feasible if it ensures that the \gls{soc} never falls below $10\%$ and reaches at least $\max(\mu_d,20\%)$ after the last break.
To this end, we only compute minimally feasible plans, i.e., those plans that do not satisfy the previous criteria when switching from a charging break to a non-charging break, or from a DC charging break to an AC charging break.
While calculating all minimal feasible charging plans is only tractable for a small number of breaks, we further declare charging plans to be feasible if they consist of at least four charging breaks.

To define the possible locations of charging stations, we use a grid approach (cf. \cite{cavadas_2015,csonka_2017}) by overlaying the entire city region of Düsseldorf with a square grid with cells of side length $100\mathrm{m}$, resulting in a total of $21\,745$ possible locations.
For each location, we allow $n_{\mathrm{AC}} \in \{2,4,6,8\}$ AC charging ports, or $n_{\mathrm{DC}} \in \{ 4,6,8\}$ DC charging ports, and define its cost to be $n_{\mathrm{AC}}$ or $2n_{\mathrm{DC}}$, respectively.

Finally, for each break $b$ of a driver, we define its set of nearby locations $L(b)$ as all locations within the grid whose distance is at most $200$ meters.
In order to reduce the size of the instance, we exclude those locations $\ell$ for which there exists another location $\ell'$ with $\{b\in B(d):d\in D,\,\ell\in L(b)\} \subseteq \{b\in B(d):d\in D,\,\ell'\in L(b)\}$.

\section{Results}\label{sec:computational-study}

In this section, we demonstrate the effectiveness of the strengthened formulations described in \Cref{sec:enhance} and show that the resulting model is capable to solve real-world instances.

\subsection{Effectiveness of strengthened formulations}
To analyze the effectiveness of the strengthened formulations presented in \Cref{sec:enhance}, first recall that we proposed to add the \emph{capacity cuts}~\eqref{eqajad32}.
Second, we suggested to use \emph{charging plan cuts}, i.e., inequality descriptions of $Q(d)$ for each driver $d$ (see~\Cref{sec:cpc}) instead of the auxiliary variables $z$ and constraints~\eqref{eq92437d}.
A third enhancement that turned out to be very efficient is to relax the binary variables $y$ (which assign charging processes to locations) to \emph{fractional} variables in $[0,1]$.
Clearly, this may result in selections of charging stations that do not permit a feasible assignment of charging processes to locations.
However, in all our experiments, we found that this caused almost no change in the value of the optimal solution, but had a significant impact on the performance of our model.

To evaluate the impact of the proposed enhancements, for each electrification rate in $\{1\%,2\%,\dots,6\%\}$, we independently sampled driver data for $10$ days as described in the previous section.
The size of the instances in our experiments spans a range between $200$ and $1\,200$ drivers (that need to charge at a public charging station) who have between $1\,000$ and $6\,000$ stops in total.
For each subset of the three enhancements, we ran the adapted model on all generated instances using \cite{gurobi} on a standard laptop. 
First, we evaluated the computational times, averaged over the ten instances per electrification rate, until Gurobi determines a gap of at most $1\%$.
Here, the gap is defined as $(P-D)/D$, where $P$ denotes the objective value of the best known feasible solution and $D$ denotes the dual bound.
The results are depicted in \Cref{fig:runtime}.

\begin{figure}
    \begin{center}
        \includegraphics[width=.8\textwidth]{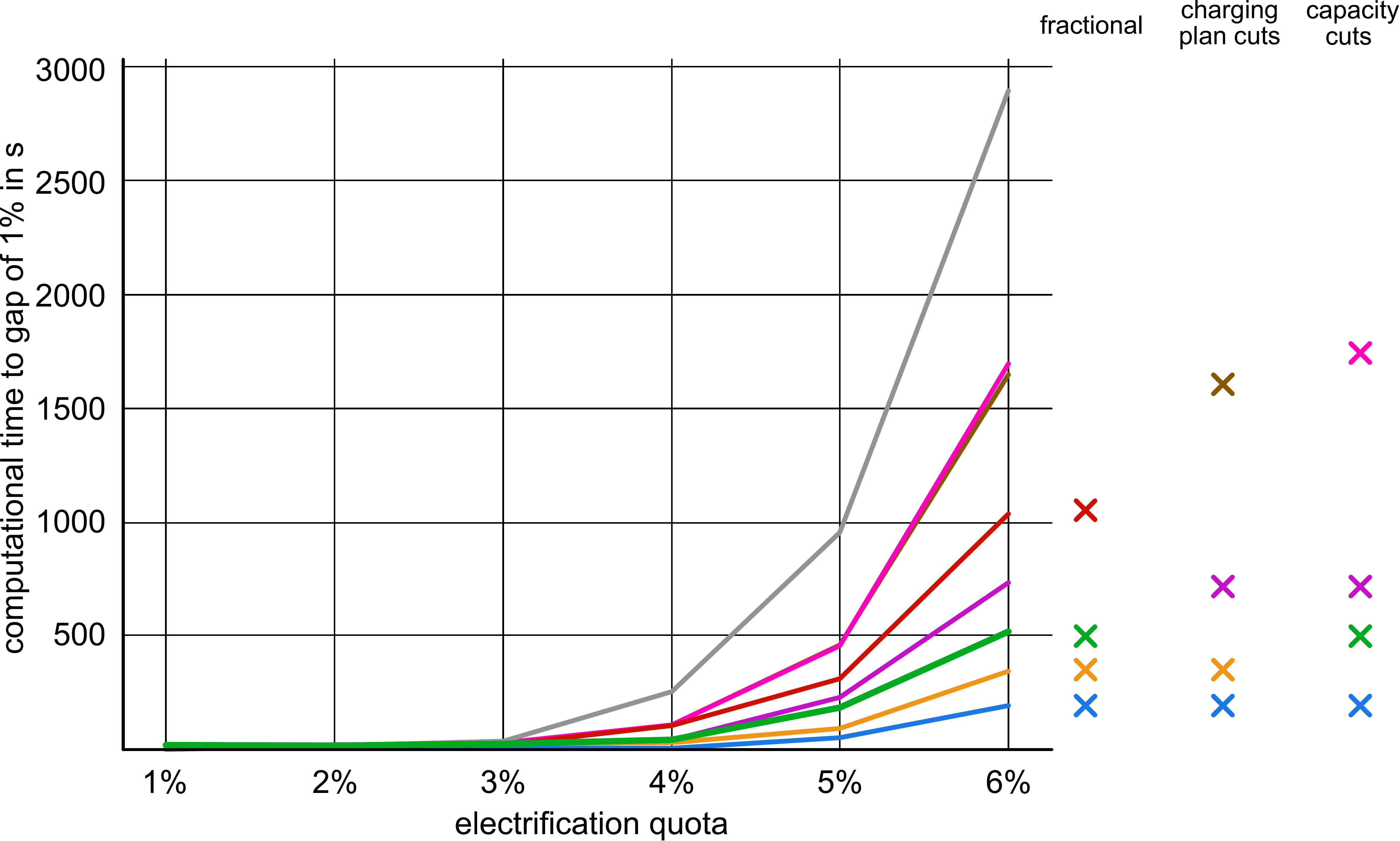}
    \end{center}
    \caption{Comparison of average computational times for enhanced models. Each curve represents a variant of our original model, where the selected enhancements are depicted on the right.}
    \label{fig:runtime}
\end{figure}

We observe that in our calculations, adding additional enhancements always has a positive effect on the computational time of our model. 
The strength of the enhancements appears to increase from `capacity cuts' to `charging plan cuts' to the fractional relaxation of the $y$ variables. 

Secondly, we evaluated the quality of the linear programming relaxations of the respective models by evaluating the `root LP gap' given by $(\mathrm{OPT}-D)/D$, where $\mathrm{OPT}$ denotes the optimum value and $D$ denotes the dual bound directly after the linear programming relaxation has been solved.
The results are depicted in \Cref{fig:gap}.

\begin{figure}
    \begin{center}
        \includegraphics[width=.8\textwidth]{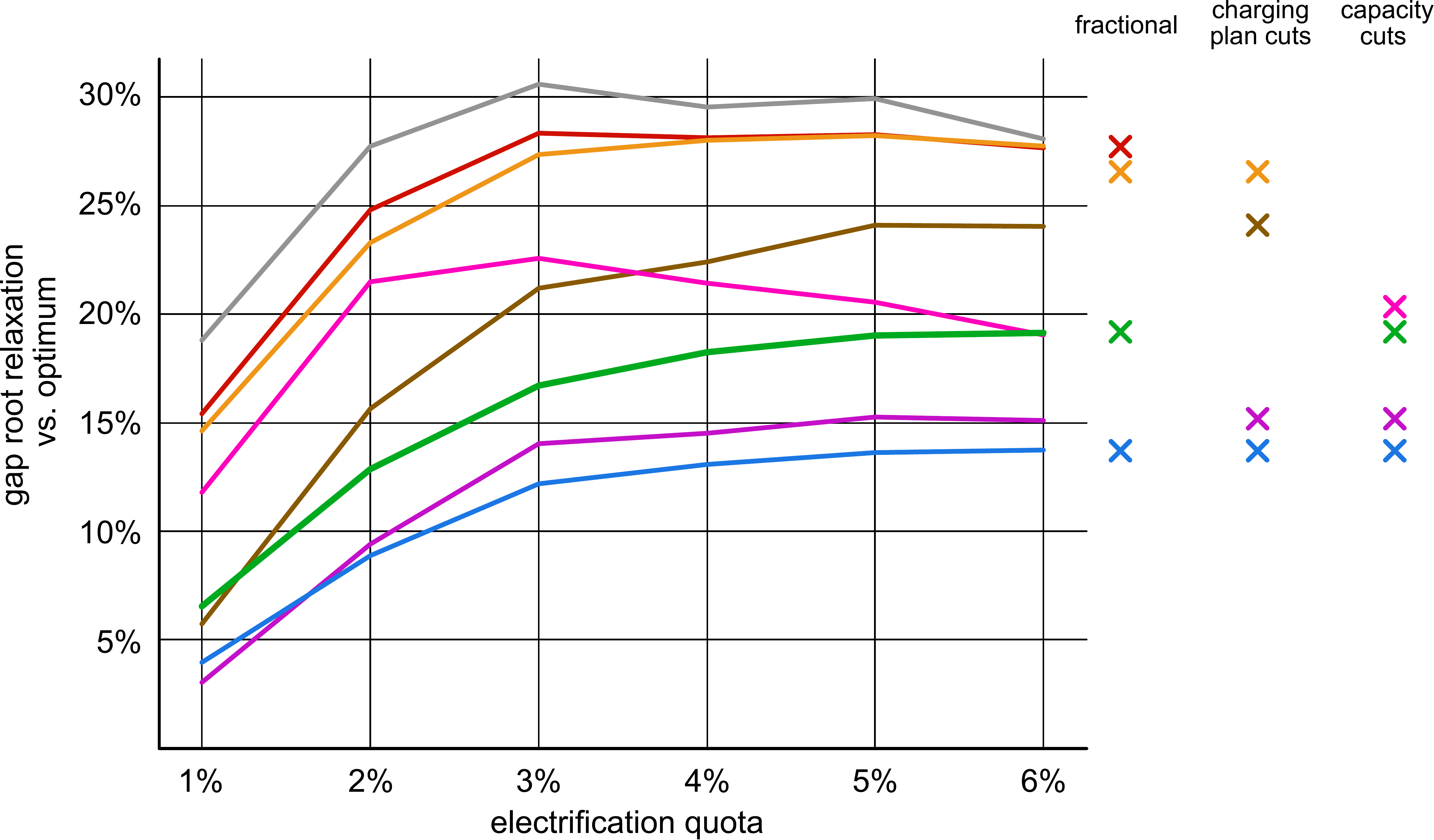}
    \end{center}
    \caption{Comparison of average root LP gap for enhanced models. Again, each curve represents a variant of our original model, where the selected enhancements are depicted on the right.}
    \label{fig:gap}
\end{figure}

It should be noted that since Gurobi's preprocessing routines may further strengthen the linear programming relaxations, it is difficult to specify the actual linear program that determines the dual bound $D$. 
However, as above, we observe that adding enhancements is favorable in order to improve the root LP gap.
In particular, adding the capacity cuts improves the gap most significantly.
Note that without preprocessing, the other two enhancements would not have an effect on the dual bound.

In summary, we recommend to use all three presented enhancements in a practical application.
For the `fractional' enhancement, we observe the largest improvement in terms of computational time, which comes at the cost that the solution is not necessarily feasible. 
Since the exact future mobility demand is not known a priori and the new solution value is almost indistinguishable from the original, this cost seems negligible.
The two different classes of cuts also improve the computational time by an extent that clearly outweighs their generation cost.

\subsection{Case study analysis}
In this section, we demonstrate that our approach is able to determine positions for charging infrastructure in large urban environments that satisfy driver demands to a high degree.

To this end, we consider the two following slight modifications of the presented model.
First, instead of minimizing the cost and satisfying all drivers, we maximize the number of satisfied drivers under the constraint that the cost is below a given budget.
This allows us to compare our solutions with existing positions, and can be easily incorporated into our model by introducing binary variables for each driver indicating whether their demands could be satisfied.
Second, instead of considering a single set of drivers, we incorporate multiple independent sets of drivers (which can be thought of as different days).
This avoids overfitting to driver data of a single day.

Notice that, in our model, not only the positioning of charging stations but also the allocation of a fixed set of drivers to charging stations is optimized.
Thus, in order to evaluate our method on realistic instances, it is necessary to understand how a solution performs on (i) an unknown set of drivers who (ii) occupy charging stations according to some natural behavior.

We propose to evaluate the positioning of charging stations by means of a driver-based simulation.
In Appendix~\ref{sec:simulation}, we describe a simple simulation, where each driver greedily aims at satisfying their own demand.
Recall that our goal is to determine charging stations that allow drivers to switch to \glspl{ev} without having to significantly deviate from their original schedules.
To this end, within the simulation, we count the number of drivers that need to take `large' detours in order to satisfy their demand.
Here, we say that a driver took a large detour if she charges at a distance of more than $\SI{400}{\metre}$ to the position of her respective break.
A walking distance in this range is a standard assumption in the related literature (cf.~\cite{schiffer_2021}).
If the distance is more than $\SI{5}{km}$, then we say that the driver's schedule is not compatible.

\begin{figure}
    \begin{center}
        \includegraphics[width=.6\textwidth]{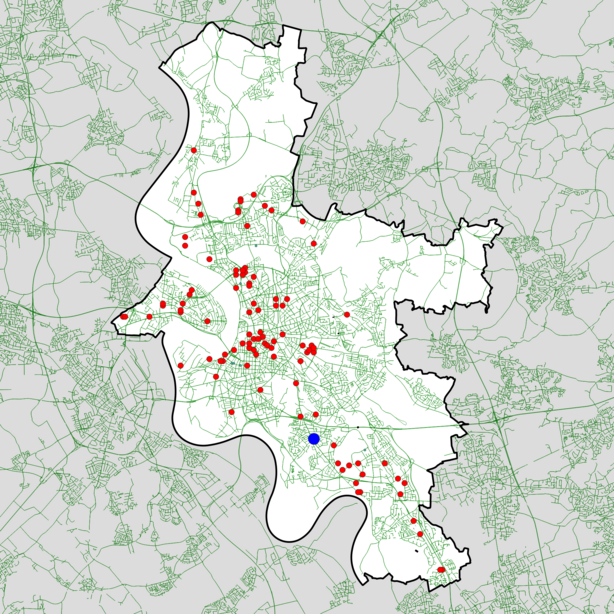}
    \end{center}
    \caption{Currently existing charging infrastructure in the city of Düsseldorf (November 2021). Red points represent AC charging stations, and blue points represent DC charging stations.}
    \label{fig:b1}
\end{figure}

The baseline for our experiments are the positions of existing charging infrastructure in Düsseldorf\footnote{\url{https://www.bundesnetzagentur.de/SharedDocs/Downloads/DE/Sachgebiete/Energie/Unternehmen_Institutionen/E_Mobilitaet/Ladesaeulenregister.xlsx}} (cf. \Cref{fig:b1}), which has a total cost of $C=272$ cost units as defined in \Cref{sec:case-study}.
We compute four solutions with budgets worth $C$, $2C$, $4C$ and $6C$, each based on two random driver sets corresponding to electrification rates of $3\%$, $5\%$, $12\%$, and $18\%$, respectively.
The specific models were chosen in a way that the respective budgets had to be fully used.
While there is no clear indication what electrification rates and number of driver sets are optimal as an input, we found that solutions obtained from different variations of the input performed almost equally in our evaluations.

\begin{figure}
    \begin{center}
        \begin{subfigure}[t]{.4\textwidth}
            \includegraphics[width=\textwidth]{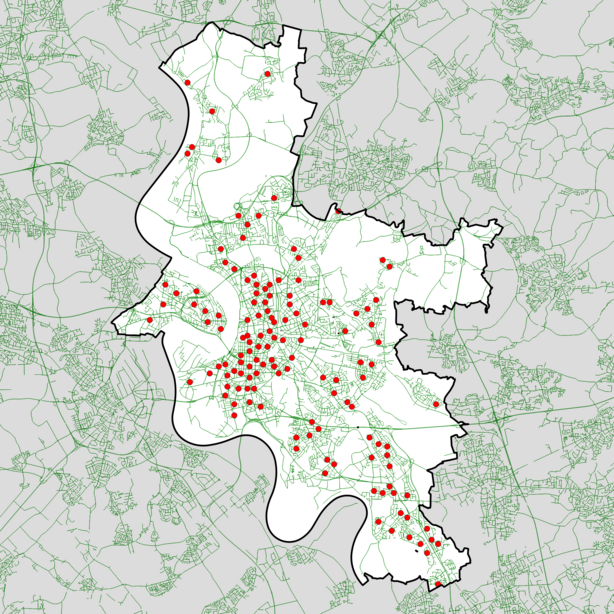}~
            \caption{Budget $C$}
            \label{fig:b2a}
        \end{subfigure}
        \begin{subfigure}[t]{.4\textwidth}
            \includegraphics[width=\textwidth]{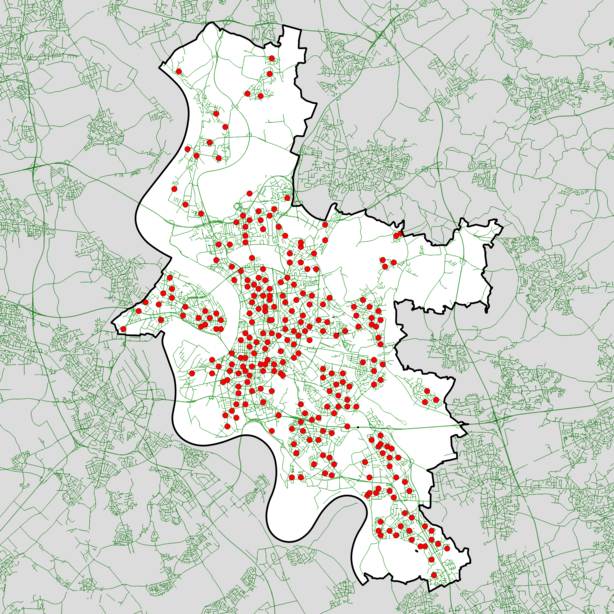}~
            \caption{Budget $2C$}
            \label{fig:b2b}
        \end{subfigure}
        \begin{subfigure}[t]{.4\textwidth}
            \includegraphics[width=\textwidth]{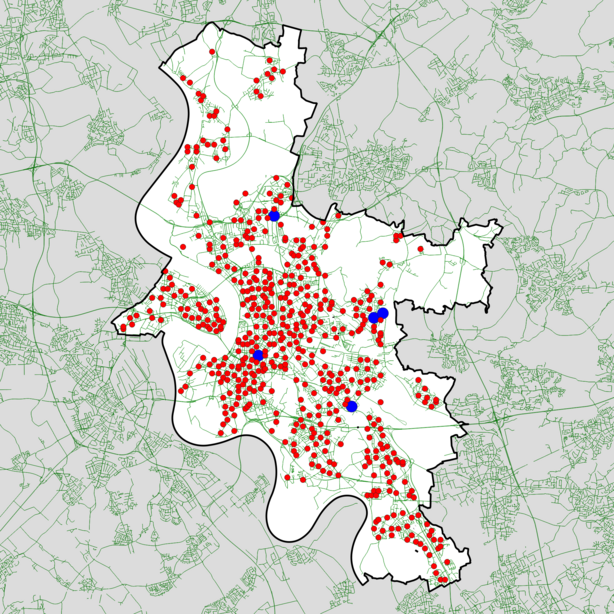}~
            \caption{Budget $4C$}
            \label{fig:b2c}
        \end{subfigure}
        \begin{subfigure}[t]{.4\textwidth}
            \includegraphics[width=\textwidth]{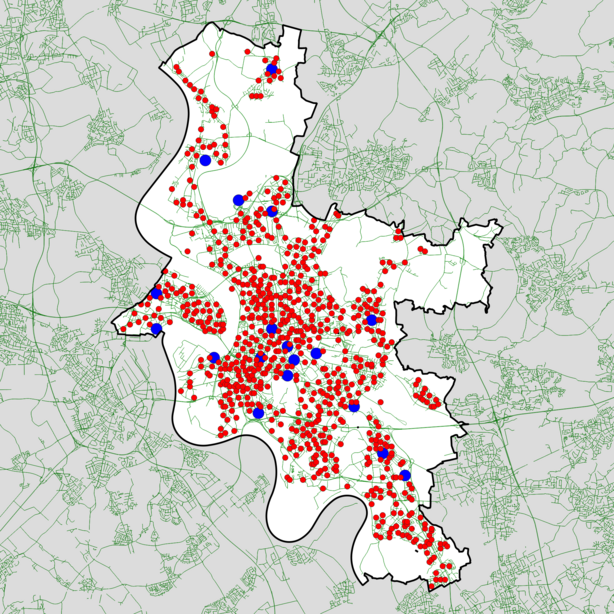}~
            \caption{Budget $6C$}
            \label{fig:b2d}
        \end{subfigure}
    \end{center}
    \caption{Charging station positions for selected solutions of the budget-constrained model. Red points represent AC charging stations, and blue points represent DC charging stations.}
    \label{fig:b2}
\end{figure}

Each solution was found within at most $\SI{5}{hrs}$ using Gurobi version 9.5.1 on a standard laptop and is within $1\%$ of the optimal solution.
The solutions consist of $272$, $544$, $1068$, and $1560$ charging ports, respectively, and the arrangements of the respective charging stations are shown in~\Cref{fig:b2}.
We see that, in each solution, the charging infrastructure is much more evenly dispersed than in the reference arrangement (see~\Cref{fig:b1}).
This is in particular observable in~\Cref{fig:b2a}, which operates on the same budget as the reference arrangement, but covers a larger area.
Since in our model, DC charging stations are twice as expensive as AC stations, they are only placed within scenarios that permit a higher budget, i.e., for a budget of more than $4C$ (see~\Cref{fig:b2c} and~\Cref{fig:b2d}).
The general structure of our solutions is an even distribution over the central part of the city (see in particular~\Cref{fig:b2a}).
Starting from a budget of $2C$, also outer areas are well covered (see~\Cref{fig:b2b}).
While a budget of $4C$ additionally allows for some DC charging stations (see~\Cref{fig:b2c}), a budget of $6C$ suffices for an even distribution of both types of charging stations over the whole city area.

\begin{figure}
    \begin{center}
        \includegraphics[width=.65\textwidth]{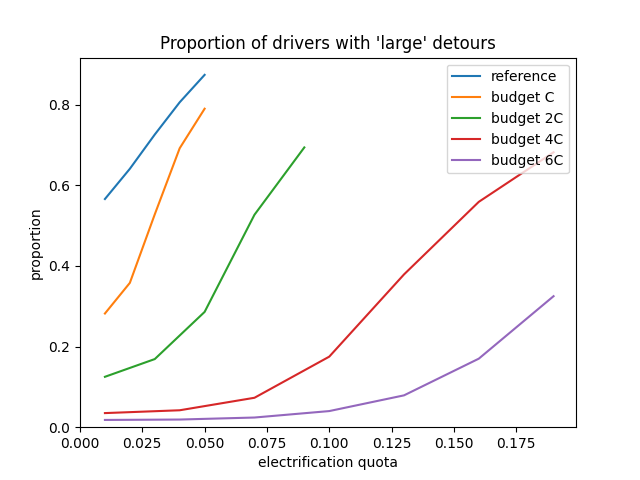}
    \end{center}
    \caption{Proportion of drivers that need to take a detour of more than $\SI{400}{\metre}$ for at least one of their charging processes. Each datapoint is the average over 10 simulation runs.}
    \label{fig:performance1}
\end{figure}

\begin{figure}
    \begin{center}
        \includegraphics[width=.65\textwidth]{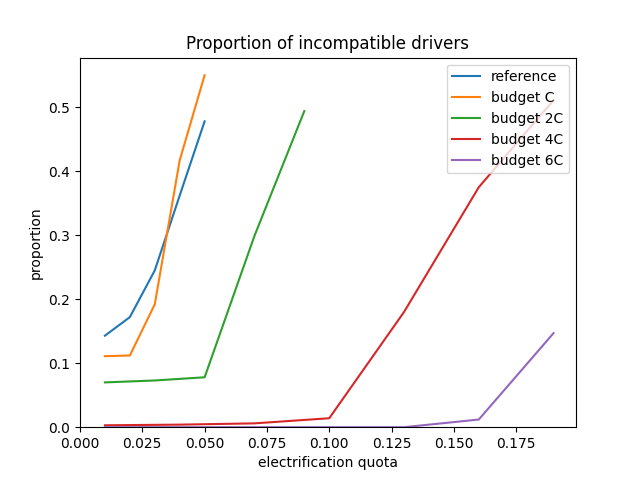}
    \end{center}
    \caption{Proportion of drivers that are incompatible. Each datapoint is the average over 10 simulation runs.}
    \label{fig:performance2}
\end{figure}

To evaluate the performance of our solutions, we depict the proportion of drivers that need to take at least one detour of at least $\SI{400}{\metre}$ in~\Cref{fig:performance1}.
We clearly see that our solution with a budget of $C$ outperforms the reference solution.
Still, neither of the two positionings can satisfy the demand of an unknown set of drivers to a satisfying degree.
In contrast, if we allow for a budget of $4C$, our solution provides sufficient charging infrastructure for populations that consist of up to $10\%$ of \glspl{ev}.
The 6-fold budget suffices for an electrification rate of $15\%$ consistently. 

In~\Cref{fig:performance2}, we show the proportion of drivers whose schedules are not compatible, i.e., who need to take a detour of more than $\SI{5\,000}{\metre}$ for at least one of their charging processes.
The graphic shows that the positionings of charging stations appear to have a `tipping point': a specific electrification rate at which the number of incompatible drivers begins to drastically increase.
At lower electrification rates, the proportion of incompatible drivers is almost constant, and is comprised of the drivers whose demands are not covered by the positioning.
This graphic underlines the result of \Cref{fig:performance1}:
while a budget of $2C$ is not sufficient to cover the entire region of the city (even for small electrification rates), our solution for the 4-fold budget performs well for electrification rates of up to $10\%$, and a 6-fold budget is sufficient for electrification rates of $15\%$.

\section{Conclusion}\label{sec:conclusion}

We introduced a mathematical programming based approach for determining positions of charging stations that allow \gls{ev} drivers to maintain their individual daily schedule without significant deviations for charging.
We formulated this problem as a combinatorial optimization problem and derived \gls{milp} models with strengthened formulations that allow to handle large instances and various objective functions.
We demonstrated how this approach can be utilized in practice based on traffic data containing mobility information of individual drivers.
We presented results of a case study for the city of Düsseldorf, which show that our approach allows to efficiently solve instances with electrification rates up to $15\%$.

Our work paves the way for future research from a methodological and from a practitioner's perspective.
From a methodological perspective, it remains interesting to enhance our framework for solving even larger instances.
This can be done by either further improving our exact approach or by developing competetive metaheuristics.
In the latter case, our exact approach may serve for benchmarking purposes.

From a practitioner's perspective, several questions remain that require additional case studies.
First, it remains interesting to analyze how existing charging infrastructure should be extended to meet increasing demand.
The approach presented in this paper is readily applicable to such a problem setting (by fixing decision variables for existing charging stations).
Second, analyzing various case studies can help to detect structural properties that yield profound insights to inform city planners in practice.
Third, extending our planning model to account for power network related constraints remains a challenging but also crucial avenue for future work.

\section*{Acknowledgements}
The work of the first author was funded by the Deutsche Forschungsgemeinschaft (German Research Foundation) under the project 277991500/GRK2201.
We would like to thank Holger Poppe for many stimulating discussions on this topic, the \emph{Transport Systems Planning and Transport Telematics group} of \emph{Technische Universität Berlin} for providing the data for our case study of Düsseldorf, and in particular Christian Rakow for answering questions around the data.

\bibliographystyle{plainnat}
\bibliography{paper}

\appendix
\section{Simulation}\label{sec:simulation}

In this section, we describe the simple driver-based simulation that has been used to evaluate the positioning of charging stations obtained by our main method.
The code for the simulation is available in our Github repository\footnote{\url{https://github.com/tumBAIS/driverAwareChargingInfrastructureDesign}} alongside with our optimization method.

Recall that our main approach is based on the idea that a set of charging stations performs well if most drivers are able to charge during breaks without deviating from their original schedules, making a transition to \glspl{ev} as easy as possible.
Given a fixed set of charging stations, we describe a simple simulation where each driver greedily aims at satisfying their own demand.
The number of drivers that need to take `large' detours in order to satisfy their demand is then regarded as a measure of performance.

\paragraph{Input}
Apart from a fixed set of charging stations to be evaluated, our simulation is again based on the set of drivers $\mathcal{D}$, which is obtained as described in \Cref{sec:case-study}.
Given an electrification rate, we sample a specific set of drivers $D$ and their starting \gls{soc} as explained in \Cref{sec:case-study}.
Moreover, we specify the maximum distance $r_h$ a driver is willing to walk between their stop and a charging station without deviating from the schedule, and a maximum distance $r_m$ that can be travelled in this manner at all.
For our experiments, we use $r_h:=400\mathrm{m}$ and $r_m:=5000\mathrm{m}$.

\paragraph{Main simulation loop}
The main attention of our simulation is devoted to the decisions of individual drivers in $D$.
We assume that each driver knows ahead of day about both the planned trips as well as the starting \gls{soc} of their \gls{ev}.
We say that a feasible charging plan is \emph{compatible} (to the set of charging stations) if for every charging stop there is a charging station with the desired mode within distance $r_m$, and initially label it as \emph{good} if the respective charging stations are within distance~$r_h$.
We assume that drivers prefer plans that both use few charging stops and have them as early as possible.
For every driver, we determine an initial \emph{preferred} charging plan accordingly from the set of good charging plans, or from the set of compatible charging plans if no good charging plan exists.

We sort the set of all breaks of drivers in $D$ in increasing order with respect to the starting time.
For each break of a driver $d$ in this list, we proceed as follows:
\begin{enumerate}
    \item If the current preferred plan of $d$ is good and contains a charging operation at the current break, we search for a charging station within distance $r_h$ that has a free port of the respective charging mode.
    \item If such a charging station exists, $d$ blocks a port for the whole duration of the break, and we proceed with the next break.
    \item If such a station does not exist, $d$ chooses a new good charging plan that still matches the drivers' charging operations so far and dismisses the current one, provided that such a new plan exists.
    \item Otherwise, if no good charging plans remain, we remove the label `good' from the current preferred charging plan and try to find a free charging station within a distance $r_m$.
    \item If no such station exists, dismiss the current plan and choose another compatible plan that still matches the drivers' charging operations so far.
    \item Only in the case, where no further compatible plan is available, and the current plan cannot be completed, we label a driver as ``not compatible'' and remove her breaks from the rest of the simulation.
\end{enumerate}

\paragraph{Evaluation}
Note that, at the end of the simulation, every driver that has not been labeled as ``not compatible'' was able to perform charging operations according to a feasible charging plan.
Moreover, if the preferred plan of a driver $d$ at the end of the simulation is still labeled as good, then $d$ was able to avoid large detours at all.
Thus, in order to evaluate how well the charging stations served the drivers' needs, we can compare the (i) number of drivers that were able to follow a feasible charging plan (i) without detours and (ii) with detours, and (iii) the number of drivers that were not able to follow a feasible charging plan at all (labeled as ``not compatible'').

\paragraph{Repeated simulation}
We remark that the initial distribution of the \glspl{soc} described above may seem arbitrary.
However, observe that at the end of each simulation loop, we can reconstruct the exact \gls{soc} of a driver at the end of the day by analyzing the performed charging processes throughout the day.
These \glspl{soc} can be used as new initial \glspl{soc} for another simulation loop.
Running the simulation loop several times yields \glspl{soc} that might serve as a more realistic input.

\end{document}